\documentclass[a4paper,12pt]{article}
%\linespread{1.5}
\usepackage{geometry,latexsym,amssymb}
\usepackage[latin5]{inputenc}
\usepackage{enumerate}
\usepackage{enumitem}
\usepackage[usenames]{color}
\usepackage{graphicx}
\usepackage{enumerate}
\usepackage{rotating}
\usepackage{multirow}
\usepackage{cite}
\usepackage{amsthm}
\usepackage{caption}
\usepackage{tikz}
\usetikzlibrary{matrix,arrows}
\usepackage{amsmath,amscd}
\usepackage{calc}
\usepackage{tabularx}
\usepackage{ifthen}

\usepackage{amsfonts}
\usepackage{amsmath}
\usepackage{footnote}
\usepackage{multicol}
\usepackage{booktabs}
\usepackage[flushleft]{threeparttable}
\usepackage[font=small,labelfont=bf]{caption}
\usepackage{lipsum}
\usepackage{eucal}
\usepackage{amssymb,amscd,amsthm, verbatim,amsmath,color,fancyhdr, mathrsfs}
\usepackage{graphicx}
\usepackage{turnstile}
\usepackage[colorinlistoftodos]{todonotes}

\setcounter{section}{0}
\newcommand\blfootnote[1]{%
  \begingroup
  \renewcommand\thefootnote{}\footnote{#1}%
  \addtocounter{footnote}{-1}%
  \endgroup
}
\newtheorem{thm}{Theorem}[section]
\newtheorem{prop}[thm]{Proposition}

\newtheorem{cor}[thm]{Corollary}

\newtheorem{ex}[thm]{Example}
\newtheorem{defn}[thm]{Definition}

%\date{\today}
\begin{document}

\begin{center}
{\Large \textbf{On Bell numbers of type $D$}} \vspace*{0.5cm}
\end{center}

\begin{center}
Hasan Arslan$^{*,a}$, Nazmiye Alemdar$^{a}$ , Mariam Zaarour$^{b}$, H{\"u}seyin Alt{\i}ndi\c{s}$^{a}$ 
\end{center}
\begin{center}
$^{a}${\small {\textit{Department of  Mathematics, Faculty of Science, Erciyes University, 38039, Kayseri, Turkey}}}\\
$^{b}$ {\small {\textit{Graduate School of Natural and Applied Sciences, Erciyes University, Kayseri, Turkey}}}\\
\end{center}

\blfootnote{Email Adressess: hasanarslan@erciyes.edu.tr (H. Arslan),  nakari@erciyes.edu.tr (N. Alemdar), mariamzaarour94@gmail.com (M. Zaarour), altindis@erciyes.edu.tr (H. Alt{\i}ndi\c{s})\\
*Corresponding Author: Hasan Arslan}

\begin{abstract}
In this paper, we will introduce Bell numbers $D(n)$ of type $D$ as an analogue to the classical Bell numbers related to all the partitions of the set $[n]$. Then based on a signed set partition of type $D$, we will construct the recurrence relations of Bell numbers  $D(n)$. In addition, we deduce the exponential generating function for $D(n)$. Finally, we will provide an explicit formula for $D(n)$.
\end{abstract}

\textbf{Keywords}: Bell number, Stirling number, generating function, signed set partition.\\

\textbf{2020 Mathematics Subject Classification}: 05A05, 05A15, 05A18, 05A19, 11B73.
\\

\section{Introduction}\label{sec1}
The main objective of this paper is to study Bell numbers of type $D$. First of all, we will give some fundamental concepts which we will use   throughout the paper. Let $[n]:=\{1,\cdots, n\}$ and $\langle n\rangle :=\{-n,\cdots, 1, 0, 1, \cdots, n\}$. The partitions of the set $[n]$ into $k$ non-empty subset, are called \textit{blocks}, are enumerated by the \textit{ordinary Stirling numbers of the second kind} and denoted by $S(n,k)$ (see \cite{br4}). The ordinary Stirling numbers of the second kind have the recurrence relation for all $n\geq 1$ 
$$S(n,k)=S(n-1,k-1)+kS(n-1,k)$$
with the initial conditions $S(0,k)=\delta_{0,k}$, where $\delta_{i,j}$ is the Kronecker delta.
The number of all set partitions of the $[n]$ is called \textit{the classical Bell numbers} and is denoted by $A(n)$. Clearly, $A(n)=\sum_{k=0}^nS(n,k)$. Classical Bell numbers $A(n)$ satisfy the following well-known recurrence relation for all $n\geq 0$
$$A(n+1)=\sum_{k=0}^n {n \choose k} A(k).$$
Also, the exponential generating function for $A(n)$ is
$$A(x)=e^{e^x-1}.$$
When taking $A(x)=e^{e^x-1}$ into account, it is not hard to see that a concrete formula for $A(n)$ is given by 
$$A(n)=e^{-1} \sum_{r \geq 0}\frac{r^n}{r!}.$$

One can observe some classical Bell numbers for given a few small $n$ and $k$ values in Table \ref{tab1}. \\

\begin{table}[t]%
	\centering
	\caption{\centering{Bell numbers $A(n)$ and the classical Stirling numbers of the second kind $S(n,k)$}\label{tab1}}%
	\scalebox{0.95}{	\begin{tabular*}{\textwidth}{@{\extracolsep{\fill}}c|c||cccccccc@{\extracolsep{\fill}}}
		\toprule
		n & $A(n)$ & $S(n,0)$ & $S(n,1)$ & $S(n,2)$ & $S(n,3)$ & $S(n,4)$ & $S(n,5)$ & $S(n,6)$ & $S(n,7)$ \\
		\midrule
		0 & 1 & 1 & & & & & & & \\
		1 & 1 & 0 & 1 & & & & & & \\
		2 & 2 & 0 & 1 & 1 & & & & & \\
		3 & 5 & 0 & 1 & 3 & 1 & & & & \\
		4 & 15 & 0 & 1 & 7 & 6 & 1 & & & \\
		5 & 52 & 0 & 1 & 15 & 25 & 10 & 1 & & \\
		6 & 203 & 0 & 1 & 31 & 90 & 65 & 15 & 1 & \\
		7 & 877 & 0 & 1 & 63 & 301 & 350 & 140 & 21 & 1 \\
		\bottomrule
	\end{tabular*}}
\end{table}

Sagan and Swanson \cite{br3} introduced signed set partition of $\langle n\rangle$.  Due to \cite{br3}, a \textit{signed set partition} is a partition of the set $<n>$ having the form
$$\tau=P_0~|~P_1/P_2~|~P_3/P_4~|~\cdots~|~P_{2k-1}/P_{2k}$$
satisfying both
\begin{enumerate}
	\item [$\bullet$]$0\in P_0$ and if $i \in P_0$ then $-i \in P_0$, and 
	\item [$\bullet$] $P_{2i-1}=-P_{2i}$ for all $i,~1 \leq i \leq k$
\end{enumerate}
where $-S=\{-s~:~s \in S\}$.
The blocks $P_{2i-1}$ and $P_{2i}$ are called \textit{paired} and the block $P_0$ is called \textit{zero-block}. Denote by $S_B(<n>,k)$ the set of all type $B$ set partitions of $<n>$ with $2k+1$ blocks.

The Stirling numbers of type $B$ of the second kind denoted by $S_B(n,k)$, which was first defined by Reiner in \cite{br2}, is the number of signed set partitions of $\langle n\rangle$ having exactly $k$ paired blocks and have the following recurrence relation for all $n\geq 1$
\begin{equation}\label{rec}
	S_B(n,k)=S_B(n-1,k-1)+(2k+1)S_B(n-1,k)
\end{equation}
with the initial conditions $S_B(0,k)=\delta_{0,k}$, see \cite{br1}. 
Bell numbers of type $B$, which is also known as Dowling numbers (see \cite{br1'}), and second kind Stirling numbers of type $B$ correspond to the sequences oeis.org/A007405 and oeis.org/A039755 in OEIS, respectively. In Table \ref{tab2}, one could see both Bell numbers of type $B$ and Stirling numbers in type $B$ of the second kind for small values of $n$ and $k$.\\

\begin{table}[h]%
	\centering
	\caption{\centering{Bell numbers $B(n)$ and Stirling numbers $S_B(n,k)$ in type $B$ of the second kind} \label{tab2}}%
            \scalebox{0.8}{
	\begin{tabular}{c|c||cccccccc}
		\toprule
		%\centering
		n &$B(n)$& $S_B(n,0)$ & $S_B(n,1)$ &$S_B(n,2)$&$S_B(n,3)$& $S_B(n,4)$& $S_B(n,5)$&$S_B(n,6)$& $S_B(n,7)$\\
		\midrule
		0&1&1\\
		1&2&1&1\\
		2&6&1&4&1\\
		3&24&1&13&9&1\\
		4&116&1&40&58&16&1\\
		5&648&1&121&330&170&25&1\\
		6&4088&1&364&1771&1520&395&36&1\\
		7&28640&1&1093&9219&12411&5075&791&49&1\\
		\bottomrule
\end{tabular}}
\end{table}

A \textit{set partition of type $D$} is actually a signed set partition of $\langle n \rangle$ whose zero-block contains at least two positive elements or no positive elements of $[n]$ (see \cite{br1}). A set partition of type $D$ is called \textit{$D$-type partition}. 
Let $S_D(<n>,k)$ denote the collection of all $D$-type partitions of $\langle n \rangle$ with $2k+1$ blocks. The Stirling numbers of type $D$ of the second kind, denoted by $S_D(n,k)$, is defined to be the size of $S_D(<n>,k)$ (see \cite{br2}).

\begin{ex}
	The following signed partitions
	\begin{enumerate}
		\item [$\bullet$] $\rho=0, \pm{2}, \pm{3}, \pm{5}~|~1/-1~|~4/-4$
		\item [$\bullet$] 
		$\tau=0~|~-1, 2/1,-2~|~3,5/-3,-5~|~4/-4$
	\end{enumerate}
	are $D_5$-type partitions. But, the signed partition $\gamma=0,\pm{5}~|~1, 3, -4/-1,-3, 4~|~2/-2$ is not $D_5$-type partition.
\end{ex}
There exist the following two well-known identities between Stirling numbers of the second kind in types classical, $B$ and $D$ (see Corollary 12 in \cite{br5} and Proposition 3 in \cite{br2'}):
\begin{equation}\label{d1}
	S_B(n,k)=\sum_{i=k}^n 2^{i-k} {n \choose i} S(i,k)
\end{equation}
\begin{equation}\label{d2}
	S_D(n,k)=S_B(n,k)- n 2^{n-1-k} S(n-1,k).
\end{equation}
Thus it is easy to check that the number of signed partitions of $\langle n \rangle$ with only one positive element in its zero-block is 
$$n\sum_{k=0}^{n-1} 2^{n-1-k} S(n-1,k).$$
One can immediately obtain the exponential generating function of $S_D(n,k)$ by using (\ref{d1}) and (\ref{d2}) and Theorem 4.1(a) in \cite{br3} as
$$\sum_{n \geq 0} S_D(n,k)\frac{x^n}{n!}=\frac{1}{2^k k!}(e^x-x)(e^{2x}-1)^k.$$
In OEIS, Bell numbers of type $D$ corresponds to the sequence oeis.org/A039764. Table \ref{tab3} records both Bell numbers of type $D$ and the second kind Stirling numbers in type $D$ for some values of $n$ and $k$.\\

\begin{table}[h]%
	\centering
	\caption{\centering{Bell numbers $D(n)$ and Stirling numbers in type $D$ of the second kind $S_D(n,k)$}\label{tab3}}%
  \scalebox{0.8}{
	\begin{tabular}{c|c||cccccccc}
		\toprule
		%\centering
		n &$D(n)$& $S_D(n,0)$ & $S_D(n,1)$ &$S_D(n,2)$&$S_D(n,3)$& $S_D(n,4)$& $S_D(n,5)$&$S_D(n,6)$& $S_D(n,7)$\\
		\midrule
		0&1&1\\
		1&1&0&1\\
		2&4&1&2&1\\
		3&15&1&7&6&1\\
		4&72&1&24&34&12&1\\
		5&403&1&81&190&110&20&1\\
		6&2546&1&268&1051&920&275&30&1\\
		7&17867&1&869&5747&7371&3255&581&42&1\\
		\bottomrule
\end{tabular}}
\end{table}

\begin{defn}[\cite{br1'}]
	The number of all signed set partitions of the $\langle n\rangle$ is called \textit{Bell numbers of type $B$} and is denoted by $B(n)$. Then we can write 
	$$B(n)=\sum_{k=0}^nS_B(n,k).$$
\end{defn}

We would like to thank Bruce Sagan for calling reference \cite{br1'} to our attention. Upon this, we encountered that Shankar  in \cite{br3'} proved that Bell numbers in type $B$ satisfy tle following recurrence relation
	\begin{equation}\label{33}
		B(n+1)=B(n)+\sum_{k=0}^n 2^k {n \choose k} B(n-k)
	\end{equation}
for all $n \geq 0$. Moreover, Shankar gave the recurrence relation in equation (\ref{33}) in more general form for colored Bell numbers and also established an explicit formula for colored Bell numbers in Corollary 4 of \cite{br3'}. Thus it is clear from \cite{br3'} that for all $n \geq 0$ 
	\begin{equation}\label{13}
B(n)=e^{\frac{-1}{2}}   \left( \sum_{r \geq 0}\frac{(2r+1)^n}{2^r r!} \right ).
	\end{equation}
 In \cite{br1'}, Buck et al. established a bijection between flattened Stirling permutations of order $n+1$ and signed set partitions of $\langle n \rangle$, and then left many open problems and conjectures. Shankar \cite{br3'} has proved all of the conjectures and open problems, except for Problem 3, posed by Buck et al. in \cite{br1'}. Furthermore, we want especially to note that the exponential generating function of $B(n)$ is 
	\begin{equation}\label{03} 
B(x)=e^{\frac{e^{2x}}{2}+x-\frac{1}{2}}
	\end{equation}
which follows easily from Theorem 4.1(a) of Sagan and Swanson, see \cite{br3}.

\begin{prop}\label{rel}
	For all $n \geq 0$, we have
	$$\sum_{k=0}^n (2k+1) S_B(n,k)=\sum_{k=0}^n {n \choose k} 2^k B(n-k).$$
\end{prop}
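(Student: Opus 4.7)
The plan is to reduce the identity to the already-stated recurrence \eqref{33} by way of a one-line combinatorial observation. Specifically, I would first establish that
$$B(n+1)=\sum_{k=0}^{n}(2k+2)\,S_B(n,k),$$
and then rewrite $\sum_k(2k+1)S_B(n,k)=B(n+1)-B(n)$ and invoke \eqref{33}.

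For the combinatorial step, I would count type-$B$ partitions of $\langle n+1\rangle$ by first stripping the pair $\pm(n+1)$ and asking which type-$B$ partition of $\langle n\rangle$ is left. Fix such a partition with $k$ paired blocks; there are exactly three ways $\pm(n+1)$ can have originated. Either both elements lie in the zero-block (1 way), or $\{n+1\}$ and $\{-(n+1)\}$ form a new paired block on their own (1 way), or $n+1$ was inserted into one of the blocks of some existing paired pair $P_{2i-1}/P_{2i}$ and $-(n+1)$ into the other. There are $k$ pairs, and each contributes $2$ choices (according to which of $P_{2i-1},P_{2i}$ receives $n+1$), giving $2k$ choices in this case. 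Summing yields $1+2k+1=2k+2$ extensions per partition with $k$ paired blocks, hence the displayed identity.

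Next, I would subtract $\sum_k S_B(n,k)=B(n)$ from both sides of the recurrence above to obtain
$$\sum_{k=0}^{n}(2k+1)\,S_B(n,k)=B(n+1)-B(n).$$
Finally, the shifted recurrence \eqref{33} gives $B(n+1)-B(n)=\sum_{k=0}^{n}\binom{n}{k}2^k B(n-k)$, completing the proof.

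The only delicate point is the factor of $2$ in the "insert into an existing paired block" case: one must be careful to treat the two blocks $P_{2i-1}$ and $P_{2i}$ of a paired pair as distinguishable for the purpose of placing $n+1$ versus $-(n+1)$, even though the unordered pair structure is what is preserved under the negation symmetry $P_{2i-1}=-P_{2i}$. Once that is pinned down, the proof is immediate. As a sanity check, one can also derive the same identity by computing exponential generating functions via Theorem 4.1(a) of \cite{br3}: the EGF of the left-hand side simplifies to $e^{2x}B(x)$, which is manifestly the binomial-convolution EGF of the right-hand side.
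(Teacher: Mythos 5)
Your proof is correct, and its overall skeleton matches the paper's: both arguments reduce the claim to the identity $\sum_{k=0}^n(2k+1)S_B(n,k)=B(n+1)-B(n)$ and then finish by citing the recurrence \eqref{33}. The only genuine difference is how that intermediate identity is obtained. The paper derives it algebraically, by expanding $B(n+1)=\sum_k S_B(n+1,k)$ via the recurrence \eqref{rec} and telescoping; you instead prove $B(n+1)=\sum_{k=0}^n(2k+2)S_B(n,k)$ directly by an insertion argument for the pair $\pm(n+1)$ (zero-block, new singleton pair, or one of $2k$ placements into an existing paired pair) and then subtract $B(n)=\sum_k S_B(n,k)$. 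Your count $1+1+2k$ is right, and your worry about the factor $2$ is resolved exactly as you suggest: the two blocks of a paired pair are distinct sets (no block of a signed partition can equal its own negative), so placing $n+1$ in one versus the other yields distinct partitions of $\langle n+1\rangle$. In substance your combinatorial step is the standard bijective proof of \eqref{rec} summed over $k$, so the two routes carry the same content; the paper's version is shorter because \eqref{rec} is already available as a quoted fact, while yours is self-contained and makes the enumeration visible. Your generating-function sanity check ($e^{2x}B(x)$ on both sides) is also valid.
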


\begin{proof}
	Taking into account equation (\ref{rec}) and $B(n+1)=\sum_{k=0}^{n+1}S_B(n+1,k)$, then we can write 
	\begin{align*}
		B(n+1)=&\sum_{k=0}^{n+1}S_B(n+1,k)   \\
		=&S_B(n+1,0)+\sum_{k=1}^{n+1} \left( S_B(n,k-1)+(2k+1)S_B(n,k) \right) \\
		=& 1+\sum_{k=0}^{n} S_B(n,k)+\sum_{k=1}^{n+1} (2k+1)S_B(n,k)~~(\textrm{since}~~S_B(n,n+1)=0)\\
		=&B(n)+\sum_{k=0}^{n} (2k+1)S_B(n,k).
	\end{align*}
	Since $B(n+1)=B(n)+\sum_{k=0}^n 2^k {n \choose k} B(n-k)$, we obtain the desired result as
	$$\sum_{k=0}^n (2k+1) S_B(n,k)=\sum_{k=0}^n {n \choose k} 2^k B(n-k).$$
\end{proof}

For example, we illustrate the relation in Proposition \ref{rel} for both $n=2$ and $n=3$ as\\

\begin{enumerate}  
	\item [$\bullet$] $S_B(2,0)+3S_B(2,1)+5S_B(2,2)=18=B(2)+4B(1)+4B(0)$\\
	\item [$\bullet$] $S_B(3,0)+3S_B(3,1)+5S_B(3,2)+7S_B(3,3)=92=B(3)+6B(2)+12B(1)+8B(0)$.
\end{enumerate}

\section{Bell numbers of type $D$}
In this section, we will first introduce Bell numbers of type $D$ and then give their some combinatorial properties such as a recurrence relation, the exponential generating function and an explicit formula.
\begin{defn}
	The number of all $D$-type set partitions of the $\langle n\rangle$ is called \textit{Bell numbers of type $D$} and we will denote it by $D(n)$. Then we can express 
	$$D(n)=\sum_{k=0}^nS_D(n,k).$$
\end{defn}

\begin{thm}\label{99}
	Bell numbers in type $D$ satisfy
	\begin{equation}\label{11}
		D(n+1)=\sum_{i=1}^n  {n \choose i} \sum_{k=0}^{n-i}2^{n-i-k}S(n-i,k)+\sum_{k=0}^n 2^k {n \choose k} D(n-k)
	\end{equation}
	for all $n \geq 0$, where $S(n,k)$ is the classical Stirling number of the second kind.
\end{thm}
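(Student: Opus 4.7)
The plan is to count the $D$-type partitions of $\langle n+1\rangle$ by a case split on the block that contains the positive element $n+1$. There are exactly two possibilities: either $n+1$ lies in a paired block, or $n+1$ (together with $-(n+1)$) lies in the zero-block.

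\emph{Paired-block case.} Suppose $n+1$ shares a paired block $\{P,-P\}$ with exactly $k$ other positive elements of $[n]$, appearing on either side of the pair. Choosing these $k$ elements costs $\binom{n}{k}$; independently for each of them, deciding whether it sits on the same side as $n+1$ or on the opposite side contributes a factor $2^k$. Once these choices are made, the remaining $n-k$ positive elements, their negatives and $0$ must form a $D$-type partition of $\langle n-k\rangle$ in their own right, contributing $D(n-k)$. Summing in $k$ produces the second term $\sum_{k=0}^{n} 2^k\binom{n}{k} D(n-k)$ of (\ref{11}).

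\emph{Zero-block case.} Once $n+1$ is placed in the zero-block, the zero-block contains the positive element $n+1$, so the type $D$ restriction (no positive elements or at least two) forces at least one further positive element from $[n]$ to also lie in the zero-block. Let $i\ge 1$ denote the number of such additional positive elements; they may be chosen in $\binom{n}{i}$ ways. The remaining $n-i$ positive elements, paired with their negatives, then have to form a signed set partition of $\langle n-i\rangle$ whose zero-block is the singleton $\{0\}$. Reading off the $i=n$ term of (\ref{d1}) shows that the number of such partitions with exactly $k$ paired blocks equals $2^{n-i-k} S(n-i,k)$; summing on $k$ and then on $i$ yields the first term $\sum_{i=1}^{n}\binom{n}{i}\sum_{k=0}^{n-i} 2^{n-i-k} S(n-i,k)$ of (\ref{11}).

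Adding the two contributions finishes the proof. The only point where type $D$ really enters is the lower bound $i\ge 1$ in the zero-block case: it excludes precisely the forbidden configuration in which $n+1$ becomes the unique positive element of the zero-block; without this restriction one would simply recover the type $B$ recurrence (\ref{33}). The only non-trivial enumerative input is the sub-count of signed set partitions with trivial zero-block $\{0\}$, and this is the main (but very mild) obstacle; it follows at once from (\ref{d1}), or equivalently from the identity already recorded in the paper just after (\ref{d2}).
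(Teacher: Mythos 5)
Your proof is correct and follows essentially the same route as the paper: a case split on whether $n+1$ lies in a paired block (giving $2^k\binom{n}{k}D(n-k)$) or in the zero-block (forcing $i\ge 1$ further positive elements and leaving a signed partition with trivial zero-block, counted by $2^{n-i-k}S(n-i,k)$). The only cosmetic difference is that you justify the count $2^{n-i-k}S(n-i,k)$ by reading off the $i=n$ term of (\ref{d1}), whereas the paper re-derives it directly by normalizing the smallest element of each paired block to be positive; these are the same fact.
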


\begin{proof}
	Consider $\gamma=P_0~|~P_1/P_2~|~P_3/P_4~|~\cdots~|~P_{2s-1}/P_{2s}$ a signed set partition of $\langle n+1 \rangle$ with $s$ paired non-zero blocks. 
	
	\textbf{Case 1}. Without loss of generality, we may suppose that  $n+1 \in P_1$, and that $|P_1|=k+1$, for some $k,~0 \leq k \leq n$. Then 
	$P_0~|~P_3/P_4~|~ \cdots~|~P_{2s-1}/P_{2s}$ has the form of a signed partition of the remaining $2n-2k$ elements of $\langle n+1 \rangle$, that is, $\langle n+1 \rangle \setminus (P_1 \biguplus P_2)$. There are $D(n-k)$ signed partitions of this set. Thus there are $D(n-k)$ signed partitions of $\langle n+1 \rangle$ in which one paired non-zero block is the $P_1 / P_2$. There are $2^k {n \choose k}$ signed sets of size $k+1$ containing $n+1$. Thus, the total number of $D$-type partitions of $\langle n+1 \rangle$ in which $n+1$ is in a signed set of size $k+1$ is $2^k {n \choose k} D(n-k)$. Summing up over all possible values of $k$, we obtain the second part of the summation in the right hand side of equation (\ref{11}). 
	
	\textbf{Case 2}. 
	%Suppose that $n+1 \in P_0$. Since the zero-block (if exists) in a $D$-type partition must contain at least two positive elements of $[n]$, then it is not hard to see that there are exactly $\sum_{i=1}^n {n \choose i} \sum_{k=0}^{n-i}2^{n-i-k}S(n-i,k)$ possible ways of  choosing positive elements of $[n]$ contained in $P_0$ to obtain a $D$-type partition of $\langle n+1 \rangle$.  Therefore, we conclude the first part of the summation in the right hand side of (\ref{11}).
	Suppose that $n+1 \in P_0$. To create a set partition of type $D$ with zero-block $P_0$ such that $n+1 \in P_0$, we first need to select at least one more element of $[n]$ as a zero-block in a $D$-type partition must contain at least two positive elements or no positive elements of $[n]$. So we can do this in ${n\choose i}$ possible ways, where $i$ ranges from $1$ to $n$. In other words, there are ${n\choose i}$ possible ways to do this. This construction provides all possible $P_0$ such that $n+1 \in P_0$. Now we want to calculate the number of all set partitions of type $D$ such that $n+1 \in P_0$. The number of distribution of the remaining $n-i$ elements into $k$ non-empty paired blocks is $S(n-i,k)$. Since the smallest element in each of $k$ paired blocks will be positive, and the remaining $n-k-i$ elements in those $k$ blocks can be positive or negative, then there are $2^{n-i-k}$ possible ways to pick the signs of these remaining values. Since $k$ will vary from $0$ to $n-i$, we then conclude that the number of $D$-type set partitions of $\langle n+1 \rangle$ for this case is exactly $\sum_{i=1}^n {n \choose i} \sum_{k=0}^{n-i}2^{n-i-k}S(n-i,k)$. Therefore, we conclude the first part of the summation in the right hand side of equation (\ref{11}).
\end{proof}

More clearly, we can give the following example.

\begin{ex}
We can see how the formula in (\ref{11}) works for small values of $n$.
\begin{enumerate}
\item [$\bullet$] $D(3)={2 \choose 1}\Bigl\{2S(1,0)+S(1,1) \Bigl\}+{2 \choose 2}S(0,0)+{2 \choose 0}D(2)+2{2 \choose 1}D(1)+2^2 {2 \choose 2} D(0)=15$
\item [$\bullet$] $D(4)={3 \choose 1}\Bigl \{2^2S(2,0)+2S(2,1)+S(2,2) \Bigl\}+{3 \choose 2}\Bigl\{2S(1,0)+S(1,1)\Bigl\}+{3 \choose 3}S(0,0)+{3 \choose 0}D(3)+2{3 \choose 1}D(2)+2^2 {3 \choose 2} D(1)+2^3 {3 \choose 3} D(0)=72$
\item [$\bullet$] $D(5)={4 \choose 1}\Bigl\{2^3S(3,0)+2^2 S(3,1)+2S(3,2)+S(3,3)\Bigl\}+{4 \choose 2}\Bigl\{2^2S(2,0)+2S(2,1)+S(2,2)\Bigl\}+{4 \choose 3}\Bigl\{2 S(1,0)+S(1,1)\Bigl\}+{4 \choose 4}\Bigl\{S(0,0)\Bigl\}+{4 \choose 0}D(4)+2{4 \choose 1}D(3)+2^2 {4 \choose 2} D(2)+2^3 {4 \choose 3} D(1)+2^4 {4 \choose 4} D(0)=403$.
		
	\end{enumerate}
\end{ex}

Now we will derive the exponential generating functions of Bell numbers in type $D$ in the following theorem:

\begin{thm}\label{00}
	The exponential generating function of $D(n)$ is
	$$D(x)=e^{\frac{e^{2x}-1}{2}}(e^x-x).$$
\end{thm}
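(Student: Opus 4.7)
The plan is to reduce the statement to the already-recorded bivariate generating function
$$\sum_{n \geq 0} S_D(n,k)\frac{x^n}{n!} = \frac{1}{2^k k!}(e^x - x)(e^{2x}-1)^k,$$
which is stated just before the introduction of $D(n)$ in Section 1. Since the relation $D(n) = \sum_{k=0}^n S_D(n,k)$ is the very definition of the type $D$ Bell numbers, the generating function should emerge simply by summing this identity over $k$.

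Concretely, I would start from
$$D(x) = \sum_{n \geq 0} D(n) \frac{x^n}{n!} = \sum_{n \geq 0} \sum_{k=0}^{n} S_D(n,k) \frac{x^n}{n!},$$
swap the order of summation (using $S_D(n,k) = 0$ for $k > n$), and apply the displayed formula for the EGF of $S_D(n,k)$. This produces
$$D(x) = (e^x - x) \sum_{k \geq 0} \frac{1}{k!}\left(\frac{e^{2x}-1}{2}\right)^{k},$$
and the inner sum is recognized as the Taylor expansion of $\exp\!\left(\tfrac{e^{2x}-1}{2}\right)$, giving the claimed formula. Justifying the swap of summations is standard because all coefficients are nonnegative and the series converges absolutely in a neighborhood of $0$.

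There is essentially no obstacle: the main technical point is merely confirming that the exponential series can be assembled from the $k$-indexed sum, which is a routine manipulation. If one preferred a proof that does not invoke the EGF of $S_D(n,k)$, an alternative would be to translate the recurrence of Theorem \ref{99} into an ODE for $D(x)$. Setting $T(x) = \sum_{m \geq 0} \big(\sum_{k=0}^{m} 2^{m-k} S(m,k)\big) x^m/m!$, one can check via the classical identity $\sum_{m} S(m,k) x^m/m! = (e^x-1)^k/k!$ that $T(x) = e^{(e^{2x}-1)/2}$. Multiplying (\ref{11}) by $x^n/n!$ and summing yields the first-order linear ODE
$$D'(x) - e^{2x}D(x) = (e^{x}-1)\, e^{(e^{2x}-1)/2}, \qquad D(0)=1,$$
whose integrating factor $e^{-e^{2x}/2}$ reduces the right-hand side to $(e^x-1)e^{-1/2}$, so that one integration and matching the initial condition recover $D(x) = e^{(e^{2x}-1)/2}(e^x-x)$. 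However, since the $S_D(n,k)$-EGF is already available, the direct summation approach is far shorter, and that is the one I would use as the proof.
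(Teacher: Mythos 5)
Your proposal is correct, and your primary argument is genuinely different from the paper's. The paper proves the theorem by differentiating $D(x)$, substituting the recurrence of Theorem \ref{99} (together with the identity $\sum_{i=1}^n \binom{n}{i}\sum_{k=0}^{n-i}2^{n-i-k}S(n-i,k)=B(n)-\sum_{k=0}^{n}2^{n-k}S(n,k)$ from Theorem 4.7 of \cite{br1'}), arriving at the first-order linear ODE $D'(x)=e^{2x}D(x)+e^{\frac{e^{2x}-1}{2}+x}-e^{\frac{e^{2x}-1}{2}}$ with $D(0)=1$, and solving it; this is exactly your ``alternative'' route, and your version of the ODE, $D'(x)-e^{2x}D(x)=(e^x-1)e^{(e^{2x}-1)/2}$, is the correct one (the last displayed line of the paper's computation has a sign typo, writing $+e^{\frac{e^{2x}-1}{2}}$ where the preceding line correctly has a minus). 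Your preferred argument instead bypasses the recurrence entirely: you sum the EGF $\sum_{n}S_D(n,k)x^n/n!=\frac{1}{2^kk!}(e^x-x)(e^{2x}-1)^k$, already recorded in Section 1, over $k$ and recognize the exponential series. This is shorter and logically cleaner as a derivation of $D(x)$ from the definition $D(n)=\sum_k S_D(n,k)$, at the cost of leaning on the $S_D(n,k)$ generating function (itself obtained from (\ref{d1}), (\ref{d2}) and \cite{br3}) rather than on the combinatorial recurrence of Theorem \ref{99}; the paper's route has the expository advantage of putting its own Theorem \ref{99} to work. The interchange of summation you flag is indeed harmless for these nonnegative, locally finite triangular arrays. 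Both arguments are valid.
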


\begin{proof}
	Let the exponential generating function of $D(n)$ be 
	$$D(x)=\sum_{n \geq 0} D(n)\frac{x^n}{n!}.$$ 
	If we take the derivative of both sides of the above expression with respect to $x$, then we obtain 
	$$D'(x)=\sum_{n \geq 1} D(n)\frac{x^{n-1}}{(n-1)!}=\sum_{n \geq 0} D(n+1)\frac{x^{n}}{n!}.$$
	We obtain from Theorem 4.7 in \cite{br1'} that $\sum_{i=1}^n  {n \choose i} \sum_{k=0}^{n-i}2^{n-i-k}S(n-i,k)=B(n)-\sum_{k=0}^{n}2^{n-k}S(n,k)$. Thus we conclude from the fact $\sum_{n \geq 0} S(n,k) t^k\frac{x^{n}}{n!}=e^{t(e^x-1)}$ in Section 4.1 in \cite{br3} that 
	\begin{align*}
		D'(x)=&\sum_{n \geq 0} D(n+1)\frac{x^{n}}{n!} \\
		=&\sum_{n \geq 0} \left( B(n)-\sum_{k=0}^{n}2^{n-k}S(n,k)\right) \frac{x^n}{n!}+\sum_{n \geq 0} \left( \sum_{k=0}^n 2^k {n \choose k} D(n-k)\right) \frac{x^{n}}{n!}\\
		=&B(x)-\left( \sum_{n, k \geq 0}2^{n-k}S(n,k) \frac{x^n}{n!}\right)+\left(\sum_{n \geq 0} 2^n \frac{x^n}{n!}\right)\left(\sum_{n \geq 0} D(n)\frac{x^n}{n!}\right)\\
		=&B(x)-e^{\frac{e^{2x}-1}{2}}+e^{2x}D(x)\\
		=&e^{\frac{e^{2x}-1}{2}+x}+e^{\frac{e^{2x}-1}{2}}+e^{2x}D(x).
	\end{align*}
	
	When solving the initial value problem $D'(x)=e^{\frac{e^{2x}-1}{2}+x}+e^{\frac{e^{2x}-1}{2}}+e^{2x}D(x),~~D(0)=1$, this yields the exponential generating function of $D(n)$ as $D(x)=e^{\frac{e^{2x}-1}{2}}(e^x-x).$
\end{proof}

As a result of Theorem \ref{00}, we will give an explicit formula of Bell numbers of type $D$ in the following corollary.
\begin{cor}
	For $n \geq 0$, we have
	\begin{equation*}
		D(n)=e^{\frac{-1}{2}} \sum_{r \geq 0}\frac{1}{2^r r!}[(2r+1)^n-n(2r)^{n-1}].
	\end{equation*}
\end{cor}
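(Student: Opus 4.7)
The plan is to derive the explicit formula directly from the exponential generating function established in Theorem \ref{00}, namely
$D(x)=e^{(e^{2x}-1)/2}(e^x-x)=e^{-1/2}\,e^{e^{2x}/2}(e^x-x)$,
by expanding the double-exponential factor and then extracting the coefficient of $x^n/n!$ term by term.

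First I would pull the constant $e^{-1/2}$ out front and expand the middle factor using the Taylor series of the exponential,
$e^{e^{2x}/2}=\sum_{r\geq 0}\frac{(e^{2x}/2)^{r}}{r!}=\sum_{r\geq 0}\frac{e^{2rx}}{2^{r}r!}$,
which lets me rewrite
$D(x)=e^{-1/2}\sum_{r\geq 0}\frac{e^{2rx}(e^x-x)}{2^{r}r!}=e^{-1/2}\sum_{r\geq 0}\frac{e^{(2r+1)x}-x\,e^{2rx}}{2^{r}r!}$.
This is a sum of two shapes whose coefficients are transparent: the coefficient of $x^n/n!$ in $e^{(2r+1)x}$ is $(2r+1)^n$, while writing $x\,e^{2rx}=\sum_{m\geq 0}(2r)^{m}x^{m+1}/m!$ shows that its $x^n/n!$ coefficient is $n(2r)^{n-1}$ for $n\geq 1$ and $0$ for $n=0$.

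Second, I would interchange the sum over $r$ with the coefficient extraction. This step is harmless because, viewed as a formal power series in $x$, the right-hand side of the identity for $D(x)$ is genuinely finite at each coefficient level up to an exponentially decaying factor $1/(2^{r}r!)$, so the resulting numerical series converges absolutely. Doing so yields
$D(n)=e^{-1/2}\sum_{r\geq 0}\frac{1}{2^{r}r!}\bigl[(2r+1)^n-n(2r)^{n-1}\bigr]$,
which is exactly the claimed formula.

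The only subtlety (and as close as the argument comes to an obstacle) is the bookkeeping at the boundary $r=0$, where one must interpret $n(2r)^{n-1}$ consistently: with the usual convention $0^0=1$ the prefactor $n$ kills this term at $n=0$, and at $n=1$ it contributes $-1$ that cancels the $(2r+1)^n=1$ from the same $r=0$ slot, correctly reproducing that $x$ has coefficient $0$ in $e^x-x-1$ beyond the $e^x$ piece. As a sanity check one can verify $D(0)=e^{-1/2}\cdot e^{1/2}=1$ and $D(1)=e^{-1/2}\sum_{r\geq 1}\frac{2r}{2^{r}r!}=e^{-1/2}\sum_{s\geq 0}\frac{1}{2^{s}s!}=1$, matching Table \ref{tab3}, after which no further work is needed.
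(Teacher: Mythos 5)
Your proposal is correct and follows essentially the same route as the paper: both expand $e^{e^{2x}/2}=\sum_{r\geq 0}e^{2rx}/(2^{r}r!)$ and read off the coefficient of $x^n/n!$, the only cosmetic difference being that the paper splits off $B(x)$ and cites the known formula $B(n)=e^{-1/2}\sum_{r\geq 0}(2r+1)^n/(2^{r}r!)$ for that piece, whereas you re-derive it inline via $e^{2rx}e^{x}=e^{(2r+1)x}$. Your boundary checks at $n=0,1$ are a welcome addition but not a substantive departure.
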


\begin{proof}
	Let us consider that $D(x)=e^{\frac{e^{2x}-1}{2}}(e^x-x)$ from Theorem \ref{00}. Thus, we obtain from equations (\ref{13}) and (\ref{03}) that
	\begin{align*}
		D(x)&=\sum_{n \geq 0} D(n)\frac{x^n}{n!}=e^{\frac{e^{2x}-1}{2}}(e^x-x) \\
		&=B(x)-e^{\frac{-1}{2}}xe^{\frac{e^{2x}}{2}}\\
		&=e^{\frac{-1}{2}}  \sum_{n \geq 0} \left( \sum_{r \geq 0}\frac{(2r+1)^n}{2^r r!} \right ) \frac{x^n}{n!}-e^{\frac{-1}{2}}   \sum_{n \geq 0} \left(  \sum_{r \geq 0}\frac{(2r)^n}{2^r r!} \right ) \frac{x^{n+1}}{n!}\\
		&=e^{\frac{-1}{2}}  \sum_{n \geq 0} \left( \sum_{r \geq 0}\frac{(2r+1)^n}{2^r r!} \right ) \frac{x^n}{n!}-e^{\frac{-1}{2}}   \sum_{n \geq 0} \left(  \sum_{r \geq 0}\frac{n(2r)^{n-1}}{2^r r!} \right ) \frac{x^{n}}{n!}\\
		&=e^{\frac{-1}{2}} \sum_{r \geq 0}\frac{1}{2^r r!}[(2r+1)^n-n(2r)^{n-1}] \frac{x^{n}}{n!}.
	\end{align*}
	Comparing the coefficients of $\frac{x^{n}}{n!}$ for each $n\geq 0$, we then obtain the desired formula.
\end{proof}

\section{Questions:}
	\textit{\textbf{1.} It is well-known from \cite{du2022} that there is the relationship $$A(r)=\frac{1}{n!}\sum_{k=0}^n \binom{n}{k}k^rd_{n-k}$$ between derangement numbers $d_n$ and Bell numbers $A(n)$ in $S_n$. Mez\"o and Ram\'irez in \cite{mezo2019} established a nice connection between colored derangements and $r$-Bell numbers. Thus, what is the relationship between Bell numbers of type $D$ and the numbers of derangements of type $D$ studied in both \cite{chow2023} and \cite{Ji2025}?} \\
	\textit{\textbf{2.} In \cite{br3''} Sagan and Swanson constructed a set partition of type $G(m,p,n)$. Thus, it is a natural question to ask here when considering the set partition of type $G(m,p,n)$, then how exactly can Bell numbers of type $G(m,p,n)$ be defined?} \\
	\textit{\textbf{3.} What kind of relationship exists between Bell numbers of type $D$ and flattened Stirling permutations of order $n+1$ in sense of \cite{br1'}?}

\end{document}